\documentclass{amsart}

\usepackage{amsfonts}
\usepackage{amssymb}
\usepackage{amsthm}
\usepackage{amsmath}

\usepackage[english]{babel}

\usepackage{xcolor}

\usepackage[colorlinks=true, allcolors=blue]{hyperref}

\usepackage{tikz}
\usepackage{tikz-qtree}

\newtheorem{theorem}{Theorem}[section]

\newtheorem{lemma}[theorem]{Lemma}
\newtheorem{proposition}[theorem]{Proposition}
\newtheorem{defn}[theorem]{Definition}

\newtheorem{rmk}[theorem]{Remark}

\newcommand{\R}{\mathbb R}
\newcommand{\C}{\mathbb C}

\newcommand{\Z}{\mathbb Z}
\newcommand{\cl}{\mathcal {L}}
\begin{document}

\title{On the spectra of Cantor measures}
\author{Leandro Zuberman
}
\date{}
\address{Departamento de Matem\'atica. Universidad Nacional de Mar del Plata.\\ Mar del Plata. PBA 7600 ARGENTINA}
\email{leandro.zuberman@gmail.com}

\subjclass[2020]{28A80}
\keywords{Cantor measure, Spectral measure, IFS, Fourier Series}


\begin{abstract}
We consider Cantor measures on the line, with contraction factor $N^{-1}=p^{-\alpha}$ (where $p$ a positive prime, $\alpha$ a positive integer) and $m$ positive integer digits lying in distinct residue classes modulo $N$. 
We obtain a complete characterization of maximal orthogonal sets of exponentials in $L^2(\mu)$, for a class of such measures $\mu$. It is proved that the
$n+1$-th digit in the base-$N$ expansion of frequencies in a maximal orthogonal set, with the first $n$ digits  prescribed, has $m$ possible values. In consequence, there are a correspondence between labelings of the $m$-homogeneous rooted tree and maximal orthogonal sets of frequencies. 
\end{abstract}


\maketitle

\section{Introduction}

For certain measures $\mu$, there exists a discrete set $\Lambda$ satisfying that the set $E(\Lambda):=\{e^{2\pi i\lambda x}:\lambda \in \Lambda\}$ is an orthonormal basis for the Hilbert space $L^2(\mu)$. In this case, the measure is said to be spectral and the set $\Lambda$ is called a spectrum.

Spectral measures generalize spectral sets. A set $\Omega\subset \R^d$ is said to be spectral if there is an orthonormal basis of the type $E(\Lambda)$ for $L^2(\Omega)$. Fuglede conjectured that spectral sets are those sets tilling the space $\R^d$ by translations \cite{Fug74}. Although the conjecture has been proved to be false in full generality \cite{Tao04}, it has caught the interest of the community and has been shown to be valid in many particular cases. For instance, recently it has been  proved for convex sets \cite{LM19}.

Self similar Tilings have been thoroughly studied, and deep results have been obtained in this direction \cite{GH94,LW96}. In an interplay between self-similar tilings and Fuglede's conjecture, Jorgensen and Pedersen gave an example of a non-atomic singular measure which is spectral \cite{JP98}. In fact, they constructed a spectrum for a Cantor measure on the line with contraction ratio 1/4. This result kept the atention of the fractal community and was generalized in many ways. For instance, Strichartz proved that some measures with a product structure are spectral and exhibit a spectrum based on this product structure \cite{Str00}. 
Laba and Wang proved that Cantor self similar measures on the line (see definition below \eqref{autosim}) with digits admitting a Hadamard pair (see Definition \ref{Had_pair} below) are spectral \cite{LW02}. Dutkay, Haussermann and Lai extended this result to Euclidean spaces \cite{DHL19}.
These results reveal a vast family of Cantor measures that are spectral.

Spectral sets or measures can, of course, have many spectra. A natural question is how the frequencies of the spectra are distributed in the line or in the space. A classical result of Landau \cite{Lan67} gives necessary conditions for a discrete set to be a spectrum for a set. 
Dutkay et al \cite{DHSW11} extended Landau's result to spectral singular measures. They 
proved that certain spectra of Cantor measures necessarily have Beurling dimension 
equal to the dimension of the measure itself. They conjectured that this could be extended to any spectrum. However, An and Lai \cite{AL23} proved that any self similar measure whose digits have a Hadamard pair has a spectrum with Beurling dimension zero. Ultimately, little is known about the structure or distribution of spectra of singular measures.

Dutkay, Han and Siu \cite{DHS09} revisited the seminal example by Jorgensen and Pedersen (the Cantor measure with contraction factor 1/4 and two digits, giving dimension $1/2=\log 2/\log 4$) and studied all the possible spectra in this case. 
They obtained a complete characterization of all maximal orthogonal sets of type $E(\Lambda)$.
This was achieved by labeling the 2-homogeneous rooted tree using the base-4 expansion of the elements of $\Lambda$. In fact, they proved that at every position, the digits in the expansion have only two allowed values. Thus, the first $n$ digits have $2^n$ possibilities out of the $4^n$ possible. This can be thought of as a dimensional necessary condition for the spectra, coinciding with the dimension of the measure.  

In this article, we extend this result from a single measure to a family of Cantor measures. 
Specifically, we consider Cantor measures $\mu$ on the line with a contraction factor equal to the inverse of an integer that is a prime power, say $N=p^\alpha$, and digits in a finite set $D$ of positive integers in different residue classes modulo $N$. 
We prove that if $E(\Lambda)$ is orthogonal, then for each element of $\Lambda$ whose first $n$ digits in the base-$N$ expansion 
are prescribed, the number of possible digits at position $n+1$ is bounded by $|D|$ (Theorem \ref{orto}). 
 As a consequence, each such set $\Lambda$ can be arranged in a homogeneous rooted tree. Furthermore, we analyze maximal orthogonal sets of type $E(\Lambda)$. Under additional hypotheses on the aforementioned Cantor measures, we prove that the bound on the number of digits is always attained. That is, if we look at digit $n+1$ in the base-$N$ expansion of elements of $\Lambda$ with the first $n$ digits  prescribed, we will find $|D|$ possible values.  
  Moreover, if $L$ is the set of such digits at position $n+1$, then $(N,D,L)$ is a Hadamard triple (see \ref{Had_pair} for the definition of a Hadamard triple and Theorem \ref{Hadamard}).  
  Finally, we  establish a correspondence between sets $\Lambda$ for which $E(\Lambda)$ is a maximal orthogonal set and homogeneous rooted trees (with homogeneity equal to $|D|$). In fact, we prove that for each such $\Lambda$ there is a labeling of that tree and conversely (Theorem \ref{theotree}). As in the original result, this can be thought of as a dimensional necessary condition for $\Lambda$ to be spectral. 

Let us briefly comment on the hypothesis assumed and proofs. Orthogonality in $L^2(\mu)$ is naturally connected to the zeros of the Fourier transform of the measure, $\hat\mu$. As a consequence of the self-similarity, the Fourier transform of Cantor measures can be expanded as a product (see \eqref{producto}), which helps with finding its zeros. This product expansion, has factors related to the polynomial $P_D(x)=\sum_{d\in D}x^d$. In fact, a hypothesis assumed on the measure (actually on its digits) is that $P_D$ must be a product of cyclotomic polynomials (Theorems \ref{Hadamard} and \ref{theotree}) . The scale four measure considered in \cite{JP98, DHS09} has digits satisfying this condition. Although the results presented in this article extend those obtained in \cite{DHS09}, the proofs are quite different. Many of them are based on ideas appearing in \cite{Lab02}.

This article is structured as follows. In Section \ref{setting} we give most of the required definitions and give context. In Section \ref{main} we present the statements and proofs of the main results bounding and estimating the number of digits in $N$-expansions. In Section \ref{tree} we construct labelings for the rooted $m$ homogeneous trees and prove the correspondence with maximal orthogonal sets.

\section{Setting}\label{setting}
\subsection{Cantor measures}
In this paper we study Cantor-type measures on the line which are attractors of an IFS given by contraction of the form $\phi_d(x):=N^{-1}(x+d)$. More specifically, for a given integer $N>1$ and a finite set $D\subset\Z$, with elements in different residue classes modulo $N$, we consider the unique probability measure satisfying:
\begin{equation}\label{autosim}
\mu
=\frac{1}{N}\sum_{d\in D}\mu \circ \phi_d^{-1}.
\end{equation}
We call digits to the elements of $D$ and contractive factor to $N^{-1}$ or just $N$.

We adopt the notation $e(x)=e^{2\pi i x}$ and $e_\xi(x)=e(x\cdot\xi)=e^{2\pi i \xi\cdot x}$.

The Fourier transform of a finite measure $\mu$ is defined by $\hat\mu(\xi)=\int e_\xi(x)d\mu(x)$. For measures 
satisfying \eqref{autosim} we have:
\[\hat\mu(\xi)= m_D \left(\frac{\xi}{N} \right)\hat\mu \left(\frac{\xi}{N}\right),\]
where $m_D(t)=N^{-1}\sum_{d\in D}e_d(t)$. The function $m_D$ has been used in many different context such as wavelets or self-similar tilings and is referred to as filter, mask or symbol. Inductively condition \eqref{autosim} gives: 
\begin{equation}\label{producto}
\hat\mu(\xi)=\prod_{j=1}^\infty m_D(N^{-j}\xi).
\end{equation}

Observe that $e_\lambda$ and $e_{\lambda'}$ are orthogonal in $L^2(\mu)$ if and only if $\hat \mu(\lambda-\lambda')=0$. Other important observation (derived from \eqref{producto}) is that $\lambda$ is a zero of $\hat\mu$ when there is a $j\geq 1$ such that $N^{-j} \lambda$ is a zero of $m_D$.

\subsection{Cyclotomic polynomials}\label{PD}

Associated to $D$ (when $D\subset \Z_{\geq 0}$), we will consider the polynomial $P_D(x)=\sum_{d\in D}x^d$. Note that, $P_D(e^{2\pi i \xi})=N \cdot m_D(\xi)$. Thus, (real) zeros of $m_D$ are related to (complex) zeros of $P_D$ with modulus one. Those zeros are very connected to cyclotomic polynomials. 

We denote by $\Phi_n$ the cyclotomic polynomial of order $n$, which is the minimal polynomial of a primitive $n$th root of the unit $e(k/n)$ with $k$ and $n$ coprime. In other words, we have:
\begin{equation*}
    \Phi_n(x) = \prod_{\substack{{gcd(k:n)=1,}\\\\{ 1\leq k\leq n} }}(1-e(k/n)).
\end{equation*}
It will be useful to recall that if $p$ is prime $\Phi_p(x)=1+x+\cdots+x^{p-1}$ and $\Phi_{p^\alpha}(x)=\Phi_p(x^\alpha)$. In particular, $\Phi_{p^\alpha}(1)=p$.

From now on we fix $N=p^\alpha$ (where $p$ is a positive prime number, and $\alpha$ a positive integer). Also fix $D\subset \Z_{\geq 0}$ a finite set of positive
integers in different residue classes modulo $N$. Observe that this forces the IFS $\{ \phi_d \}_{d\in D}$ to have no overlaps. In addition, $\mu$ will from now on denote the probability measure defined by \eqref{autosim}. 

We will often consider the set $\{t>0:\Phi_{p^t}\mid P_D\}$. We will use from now on the letter $T$ to designate this set. Observe that we necessarily have $\prod_{t\in T}\Phi_{p^t}\mid P_D$. In consequence, $p^{|T|}$ is a factor of $|D|$ (evaluating at 1). In particular, $p^{|T|}\leq |D|$. Another important observation is that, if an element $t\in T$ were strictly bigger than $\alpha$ then the polynomial $P_D$ would have different terms with powers in the same residue class modulus $N$. Since the powers in $P_D$ are the elements of $D$ and we are assuming $D$ has all its element in different residue classes modulo $N$, this can not be the case. Therefore, $T$ will be bounded by $\alpha$.

\subsection{Expansions on base \texorpdfstring{$N$}.}
To understand the integer spectra of $\mu$, it will be usefull to expand the integers in base $N$. So, for the fixed $N>1$ and a given $k\in \Z$ we will define two sequences of integers, $k_n$ and $d_n$, with $0\leq d_n<N$ and $n\geq 0$. We start by $k_0:=k$ and the relation $k_0=d_0+Nk_1$ defines univocally $d_0$ and $k_1$. Now, inductively, if $k_n$ is given, the condition $k_{n}=d_n+Nk_{n+1}$ defines $d_n$ and $k_{n+1}$. Observe that if $k\geq 0$ then $0\leq k_{n+1}\leq \frac{k_n}{N}$ for any $n\geq 0$. So, $k_n$ is a decreasing sequence of nonnegative integers and, in consequence, there is an $n_0$ such that $k_n=0$ for $n\geq n_0-1$ and $d_n=0$ for $n\geq n_0$. Similarly, if $k<0$, there is $n_0$ such that $k\geq -N^{n_0}$. So that, $-N^{n_0-n}\leq k_n< 0$ for $0\leq n\leq n_0$. In particular, $k_{n_0}=-1$ and $d_n=N-1$ for any $n\geq n_0$. 

In this way for any integer $k$ we generate a (unique) sequence $(d_n(k))_{n\geq 0}$ with $0\leq d_n(k)<N$ ended either by only zeros or only by $(N-1)$'s. More over, there is $n_0$ such that $k=\sum_{n=0}^{n_0} d_n(k)N^k$ if $k\geq 0$ or $k=\sum_{n=0}^{n_0} d_n(k)N^k-N^{n_0}$ if $k<0$. The $n_0$ is not unique. Through the paper, we will refer to this sequence when we write $(d_n(k))_{n\geq 0}$.

It will be useful to define the following subsets of a given set $\Lambda\subset \Z$. Given $n\geq 0$ and a finite sequence of integers $(d_j)_{j=0}^{n-1}$ with $0\leq d_j<N$ 
define:
\[
\Lambda_n(d_0,\cdots,d_{n-1})=\{\lambda \in \Lambda: d_j(\lambda)=d_j \quad\forall 0\leq j<n\}.
\] If $n=0$, we interpret $\Lambda_0=\Lambda_0(\emptyset)=\Lambda$.

For a finite set $A$, we denote by $|A|$  its number of elements.

\section{Main Results}\label{main}
Recall that we have fixed $N=p^\alpha$ ($p$ positive prime, $\alpha>0$), $D\subset\Z_{\geq 0}$ finite subset with elements in different residue classes modulo $N$, 
$P_D(x)=\sum_{d\in D}x^d$, $\mu$ the probability measure satisfying \eqref{autosim},
$m_D(\xi)=N^{-1}P_D(e^{2\pi i\xi})$ and  $T=\{t>0:\Phi_{p^t}\mid P_D\}$. 
\subsection{Bounding branches}
In the next proposition we prove that in an orthogonal set of frequencies, if the first $n$ digits in the base-$N$ expansion are prescribed, then the possibilities for the $(n+1)$-th digit are bounded. 
The proof is closely related to arguments from \cite{Lab02}.

\begin{theorem}\label{orto}
	Let $\Lambda$ be a set of integers. If $E(\Lambda)$ is an orthogonal set in $L^2(\mu)$ then for any $n\geq 0$ and any finite sequence of integers $(d_j)_{j=0}^{n-1}$ with $0\leq d_j<N$, we have:
\[
	|\{d_n(\lambda):\lambda\in\Lambda_n(d_0,\cdots,d_{n-1})\}|\leq p^{|T|}. 
\]

\end{theorem}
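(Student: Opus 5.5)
The plan is to translate orthogonality into a condition on $p$-adic valuations of differences, and then count digits with a $p$-adic tree argument.

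\textbf{Step 1: integer zeros of $\hat\mu$.} For $k\in\Z\setminus\{0\}$, by \eqref{producto} and since $\hat\mu$ is continuous with $\hat\mu(0)=1$ (so only finitely many factors can be small), we have $\hat\mu(k)=0$ iff $m_D(k/N^j)=0$ for some $j\geq 1$. This holds iff $P_D(e(k/N^j))=0$. Write $v=v_p(k)$. Then $e(k/N^j)$ is a primitive root of unity of order $p^{\alpha j-v}$ when $\alpha j>v$, and equals $1$ otherwise. We have $P_D(1)=|D|\neq 0$. Since $\Phi_{p^s}$ is the minimal polynomial of a primitive $p^s$-th root of unity and $P_D\in\Z[x]$, $P_D$ vanishes there iff $\Phi_{p^s}\mid P_D$, i.e. iff $s\in T$. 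Hence
\[
\hat\mu(k)=0 \iff \alpha j-v_p(k)\in T \text{ for some } j\geq 1 \iff v_p(k)\equiv -t \pmod{\alpha}\text{ for some } t\in T,
\]
using that $T\subset\{1,\dots,\alpha\}$ (shown in Section \ref{PD}), so $j$ can always be chosen with $\alpha j-v\in T$ once the congruence holds. Thus orthogonality of $e_\lambda,e_{\lambda'}$ depends only on $v_p(\lambda-\lambda') \bmod \alpha$, which must lie in the set $R=\{-t \bmod \alpha: t\in T\}$. Since the elements of $T$ are distinct in $\{1,\dots,\alpha\}$, we get $|R|=|T|$.

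\textbf{Step 2: reduction to one digit.} Fix the prefix $(d_0,\dots,d_{n-1})$ and let $L=\{d_n(\lambda):\lambda\in\Lambda_n(d_0,\dots,d_{n-1})\}\subset\{0,\dots,N-1\}$. Choose for each $\ell\in L$ one $\lambda_\ell$ in $\Lambda_n(d_0,\dots,d_{n-1})$ with $d_n(\lambda_\ell)=\ell$. Then $\lambda_\ell-\lambda_{\ell'}=N^n(\ell-\ell')+N^{n+1}(\cdots)$. Since $0<|\ell-\ell'|<N=p^\alpha$, we get $v_p(\lambda_\ell-\lambda_{\ell'})=\alpha n+v_p(\ell-\ell')$. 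By Step 1, orthogonality forces $v_p(\ell-\ell')\in S$ for all $\ell\neq\ell'$ in $L$, where $S=\{v\in\{0,\dots,\alpha-1\}: v\equiv -t\ (\mathrm{mod}\ \alpha)\text{ for some } t\in T\}$. In particular $|S|=|T|$.

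\textbf{Step 3: counting.} It remains to show that a set $L$ of residues mod $p^\alpha$ whose pairwise differences have $p$-adic valuations in $S$ satisfies $|L|\leq p^{|S|}$. Let $c_v$ be the number of classes mod $p^v$ met by $L$, so $c_0=1$ and $c_\alpha=|L|$. Inside a class mod $p^v$, two elements of $L$ in different classes mod $p^{v+1}$ have difference of valuation exactly $v$. So if $v\notin S$, each class mod $p^v$ contains elements of at most one class mod $p^{v+1}$, giving $c_{v+1}=c_v$. If $v\in S$, we have at most $p$ subclasses, giving $c_{v+1}\leq p\,c_v$. Multiplying over $v=0,\dots,\alpha-1$ gives $|L|\leq p^{|S|}=p^{|T|}$.

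The main care point is Step 1: computing the exact integer zero set of $\hat\mu$. This requires checking that no factor of \eqref{producto} can vanish at $e(k/N^j)=1$, and that the bound $T\subset[1,\alpha]$ makes the congruence condition on $v_p(k)$ both necessary and sufficient. Steps 2 and 3 are then elementary.
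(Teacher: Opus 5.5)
Your proof is correct and is essentially the paper's argument in $p$-adic language. The paper likewise reduces orthogonality to $m_D\bigl((\ell-\ell')/N\bigr)=0$ for distinct $n$-th digits $\ell,\ell'$ (your Step 2). It then bounds the number of such digits via Lemma \ref{raices} on roots of $(x-1)\prod_{t\in T}\Phi_{p^t}$ whose ratios are also roots, which under $\ell\mapsto e(\ell/N)$ is exactly your Step 3 statement; the paper proves it by induction on $|T|$ rather than by your level-by-level count.

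A minor advantage of your formulation is that the bound only uses the necessity half of Step 1 together with $|S|\le|T|$. So $T\subseteq\{1,\dots,\alpha\}$ is not essential for you, whereas the paper needs it to force the vanishing factor to be the $(n+1)$-st one.
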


\begin{proof}
	Fix $n\geq 0$ and $(d_j)_{j=0}^{n-1}$ as in the hypotheses. Consider $\{\lambda_0, \cdots, \lambda_{m-1} \}\subset \Lambda_n(d_0,\cdots,d_{n-1})$ with
	$d_n(\lambda_j) \neq d_n(\lambda_l)$ if $j\neq l$. We want to see that $m\leq p^{|T|}$. 
    
	Since $E(\Lambda)$ is an orthogonal set, if $j\neq l$, we have 
	$$\langle e_{\lambda_j},e_{\lambda_l}\rangle _{L^2(\mu)} = \hat\mu(\lambda_j-\lambda_l) = 0.$$ 
	In virtue of \eqref{producto}, there is a $k_{jl}$ such that $m_D((\lambda_j-\lambda_l)N^{-k_{jl}})=0$.
	Define the following numbers in the interval (0,1): 
	\begin{equation}\label{tita}
	\theta_{jl} = \frac{\lambda_j-\lambda_l}{ N^{n_{jl}}} - \lfloor \frac{\lambda_j-\lambda_l}{ N^{n_{jl}}}\rfloor,
	\end{equation}
	 where $\lfloor t \rfloor$ denotes the greatest integer less or equal than $t$. We can simplify $\theta_{jl} = \frac{s_{jl}}{p^{t_{jl}}}$ with 
	 $p\nmid s_{jl}$ and $t_{jl}\leq \alpha n_{jl}$. 
	 Since $m_D$ is $\Z$-periodic, we have:
	 \[
	 m_d(\frac{s_{jl}}{p^{t_{jl}}})=m_D(\theta_{jl}) = m_D ((\lambda_j-\lambda_l)N^{-n}) =0.
	 \]
	 So, $P_d(e^{ 2\pi i \theta_{jl}} )=0$. 
	 
	 Since $s_{jl}$ is not divisible by $p$, $e^{2\pi i \theta_{jl}}$ is also a root of $\Phi_{p^{t_{jl}}}$. Moreover, is its minimal polynomial. Then we can conclude that $\Phi_{t_{jl}} \mid P_D$. If we consider the set of all possible exponent $t_{jl}$, say, $\tilde T:=\{t_{jl}:j\neq l\}$, then we have $\tilde T\subset T$ and $\prod_{t\in \tilde T}\Phi_t \lvert P_D$. 
	 
	 Observe that 
	 $
	 \lambda_j-\lambda_l =N^{k_{jl}-1}p^{\alpha-t_{jl}}s_{jl}.
	 $
	As we pointed out in the previous section, $T$ is bounded by $\alpha$ since the elements of $D$ belong to different residue classes modulo $N$. Thus, the factor $p^{\alpha-t_{jl}}s_{jl}$ is an integer not divisible by $N$. On the other hand,
	we have that $N^n \mid (\lambda_j - \lambda_l)$ since they are in $\Lambda_n (d_0,\cdots,d_{n-1})$ but $N^{n+1} \nmid (\lambda_j - \lambda_l)$ (if $j\neq l$) since $d_n(\lambda_j)\neq d_n(\lambda_l)$. So, we can conclude that all $k_{jl}=n+1$ and $\theta_{jl} = \frac{d_n(\lambda_j)-d_n(\lambda_l)}{N}$. 

	Now, define $\theta_{jj}=0$ and $\Theta=\{e(\theta_{j0}):0\leq j <m\}$. Consider the polynomial $F(x)=(x-1)\displaystyle\prod_{t\in T}\Phi_t(x)$. Since $\theta_{j0} - \theta_{l0} = \theta_{jl}$, we have that any pair of elements $\zeta, \zeta'$ of $\Theta$ satisfy:
	\begin{equation}\label{ciclo}
	F(\zeta)=0 \qquad \text{ and }\qquad F(\zeta/\zeta')=0. 
	\end{equation} 
	We can conclude that $m\leq p^{|\tilde T|}\leq p^{|T|}$ (and so the proof) if we can prove that the number of complex roots of $F$ satisfying \eqref{ciclo} is bounded by $p^{|\tilde T|}$. We prove that polynomial property in Lemma \ref{raices} below. 
		
\end{proof}

\begin{rmk}\label{ll}
The following facts were established in the previous proof. We note it here for future reference. 
\begin{enumerate}
    \item Let $\lambda$ and $\lambda'$ be integers such that $e_\lambda$ and $e_{\lambda'}$ are orthogonal. Assume $d_k (\lambda) = d_k(\lambda') $ for $0\leq k<n$ and $d_n(\lambda)\neq d_n(\lambda')$. Then $m_D(\frac{\lambda-\lambda'}{N^n})=0$. In particular, $\frac{\lambda-\lambda'}{N^n}\notin\Z$ and $\frac{\lambda-\lambda'}{N^n}=\frac{s}{p^t}$ with $p\nmid s$ and $t\in T$ 
    \item If $l,l'\in \{d_n(\lambda):\lambda\in\Lambda_n(d_0,\cdots,d_{n-1})\}$, with $l\neq l'$ then $m_D(\frac{l-l'}{N})=0$. 
    \item $|\{d_n(\lambda):\lambda\in\Lambda_n(d_0,\cdots,d_{n-1})\}|\leq |D|.$

\end{enumerate}
\end{rmk}

The following lemma, which completes the proof of Theorem \ref{orto}, was proved in \cite{Lab02}. We repeat it here for the sake of completeness.

\begin{lemma}\label{raices}
Let $\beta_1,\cdots,\beta_m$ be diferent positive integers. Let $p$ be a positive prime number. Let $F=F_{0,\beta_1,\cdots,\beta_m}$ be the following polynomial:
\[
F(x)=(x-1)\prod_{j=1}^m \Phi_{p^{\beta_j}}(x).
\]
If $\zeta_1,\cdots,\zeta_n$ are roots of $F$ satisfying $F(\zeta_j/\zeta_l)=0$ then $n\leq p^m$. 
\end{lemma}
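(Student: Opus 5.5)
The plan is to translate everything into the $p$-adic valuation of exponents and then count by induction on $m$, peeling off one allowed valuation at a time. Only two facts are used: every root of $F$ is a $p^B$-th root of unity, where $B=\max_j\beta_j$; and a $p^B$-th root of unity is a root of $F$ exactly when its order lies in $\{1,p^{\beta_1},\dots,p^{\beta_m}\}$. The second fact holds because the roots of $\Phi_{p^{\beta}}$ are precisely the primitive $p^{\beta}$-th roots of unity.

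\emph{Reduction.} We may assume the $\zeta_j$ are pairwise distinct. Write $\zeta_j=e(a_j/p^B)$ with $a_j\in\Z/p^B\Z$; distinctness of the $\zeta_j$ means distinctness of the $a_j$. For $j\neq l$, the quotient $\zeta_j/\zeta_l=e((a_j-a_l)/p^B)$ has order $p^{B-v}$, where $v=v_p(a_j-a_l)<B$ is the $p$-adic valuation of $a_j-a_l$, computed from any integer representative. The order is not $1$, since $a_j\neq a_l$. So the hypothesis $F(\zeta_j/\zeta_l)=0$ says exactly that
\[
v_p(a_j-a_l)\in V:=\{B-\beta_1,\dots,B-\beta_m\}\subset\{0,\dots,B-1\},
\]
and $|V|=m$ because the $\beta_j$ are distinct. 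The hypothesis $F(\zeta_j)=0$ will not be needed beyond the fact that $\zeta_j$ is a $p^B$-th root of unity.

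\emph{Combinatorial claim.} If $A\subset\Z/p^B\Z$ is a set whose pairwise differences all have $p$-adic valuation in a fixed set $V$ with $|V|=k$, then $|A|\le p^k$. The proof is by induction on $k$. For $k=0$ no two distinct elements are allowed, so $|A|\le1$. For $k\geq1$, let $v_0=\min V$. Every difference is divisible by $p^{v_0}$, so $A$ lies in a single residue class modulo $p^{v_0}$. That class splits into $p$ residue classes modulo $p^{v_0+1}$. Two elements of $A$ in the same class modulo $p^{v_0+1}$ have difference of valuation at least $v_0+1$, so that valuation lies in $V\setminus\{v_0\}$, a set of size $k-1$. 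By induction each class contains at most $p^{k-1}$ elements of $A$, and hence $|A|\le p\cdot p^{k-1}=p^k$. Applying the claim with $k=m$ gives $n\le p^m$.

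\emph{Main obstacle.} The only delicate point is the reduction: one must correctly identify the roots of $F$ with the roots of unity of orders $1$ and $p^{\beta_j}$, and make sure the case $j=l$, or a quotient equal to $1$, does not introduce a spurious allowed valuation. Distinctness of the $\zeta_j$ handles this. Once that translation is in place, the counting argument above is routine.
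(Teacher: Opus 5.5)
Your proof is correct and follows the same induction as the paper. Both arguments split the roots into $p$ classes by the residue of the exponent at the level of the largest $\beta_j$ (your $v_0=\min V$), observe that quotients within a class cannot be roots of $\Phi_{p^{\max\beta_j}}$, and apply the inductive hypothesis with one fewer cyclotomic factor. Your phrasing via $p$-adic valuations of differences in $\Z/p^B\Z$ is translation invariant, so it avoids the paper's imprecise claim that the elements of $Z_l$ are themselves roots of $F_{0,\beta_1,\cdots,\beta_{m-1}}$, which is true only after dividing by a fixed element of $Z_l$.
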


\begin{proof}
	The proof is by induction in $m$. If $m=1$ then $F(x)=(x-1)\Phi_{p^{\beta_1}}(x)$ and its roots are of the type $e^{2\pi i k/p^{\beta_1}} $ for some $k\in \Z$. If $p\mid k-k'$ then either $e^{2\pi i k/p^{\beta_1}} $ and $e^{2\pi i k'/p^{\beta_1}} $ were the same root or $e^{2\pi i (k-k')/p^{\beta_1}} $ is not a root of $F$. So, the number of roots satisfying the mentioned property is bounded by the number of class modulus $p$ which is $p^1$, as we wish. 
	
	Now, assume the thesis is valid for $m-1$. Again, assume $\zeta_j=e^{2\pi i k_j}$. For $0\leq l <p$ define $Z_l=\{\zeta_j:k_j\equiv l(p) \}$. Observe that using the same argument as before, if $\zeta_j,\zeta_{j'}$ are diferent elemenst in $Z_l$, then $\zeta_j/\zeta_{j'}$ can not be a root of $\Phi_{p^{\beta_n}}$. So, the elements in $Z_l$ are roots of $F_{0,\beta_1,\cdots,\beta_{m-1}}$ whose ratios also are roots. In consequence, by inductive hypothesis, we have that $|Z_l|\leq p^{m-1}$, and then $n=|\bigcup_{0\leq l<p} Z_l|\leq p\cdot p^{m-1}$, as desired. 
	
\end{proof}

\subsection{Maximality}
In this section we will show that the bound in Theorem \ref{orto} can be reached and is associated with a maximality property. Since we are dealing only with integer sets of frequencies, we introduce the following definition. It might sound artificial but we will use it only in auxiliary propositions.  

\begin{defn}
	Given $\Lambda\subset\Z$ we say that it is an integer orthogonal maximal set of frequencies for $\mu$ 
    if and only if the following two condition are satisfied:
    \begin{enumerate}
        \item $E(\Lambda)$ is orthogonal in $L^2(\mu)$
        \item If $\Lambda'\subset\Z$ is such that $\Lambda\subset\Lambda'$ and $E(\Lambda')$ is orthogonal in $L^2(\mu)$ then $\Lambda'=\Lambda$.
    \end{enumerate}
\end{defn}

\begin{proposition}\label{maxint}
	Let 
	$\Lambda$ be an integer orthogonal maximal set of frequencies for $\mu$. Fix $n\geq 0$ and $(d_j)_{j=0}^{n-1}$ with $0\leq d_j <N$. If   $\Lambda_n(d_0,\cdots,d_{n-1})\neq \emptyset$ then $$|\{d_n(\lambda):\lambda \in \Lambda_n(d_0,\cdots,d_{n-1})  \}|=p^{|T|}.$$ 
\end{proposition}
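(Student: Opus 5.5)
The upper bound $|\{d_n(\lambda):\lambda \in \Lambda_n(d_0,\cdots,d_{n-1})\}|\leq p^{|T|}$ is Theorem \ref{orto}. It therefore suffices to show that if this set of digits, call it $A$, has fewer than $p^{|T|}$ elements, then we can add to $\Lambda$ a new integer $\lambda'$ keeping orthogonality. That would contradict maximality.

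\emph{Step 1: reduction to a condition on digits.} Let $\lambda,\lambda'$ be integers whose base-$N$ digits first differ at position $k$. Then $(\lambda-\lambda')/N^k$ is an integer congruent to $d_k(\lambda)-d_k(\lambda')$ modulo $N$. By Remark \ref{ll}(1) (more precisely, the argument in the proof of Theorem \ref{orto} forcing $k_{jl}=n+1$), we get the equivalence
\[
e_\lambda\perp e_{\lambda'} \iff m_D\Big(\frac{d_k(\lambda)-d_k(\lambda')}{N}\Big)=0 .
\]
For the converse direction use \eqref{producto} with $j=k+1$ and the $\Z$-periodicity of $m_D$. Writing $x=d_k(\lambda)-d_k(\lambda')$, the condition $m_D(x/N)=0$ holds iff $x/N=s/p^t$ with $p\nmid s$ and $t\in T$. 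This is because $e(s/p^t)$ has minimal polynomial $\Phi_{p^t}$, which divides $P_D$ exactly when $t\in T$. Equivalently, $v_p(x)\in V:=\{\alpha-t:t\in T\}$, where $v_p$ is the $p$-adic valuation and $|V|=|T|$.

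\emph{Step 2: extending the digit set.} View residues mod $N=p^\alpha$ as leaves of the $p$-ary rooted tree of depth $\alpha$, reading base-$p$ digits from least significant. Then $v_p(a-a')$ is the depth at which the paths of $a$ and $a'$ split. By Step 1 and Remark \ref{ll}(2), all splits inside $A$ occur at depths in $V$. Consider the subtree spanned by $A$. At depths not in $V$ every node has one child. If every node of this subtree at a depth in $V$ had all $p$ children, then $|A|=p^{|V|}=p^{|T|}$. Hence there is a node $u$ at some depth $v\in V$ with a missing child $c$. Pick any residue $l$ whose path passes through $c$; clearly $l\notin A$. For $a\in A$ below $u$, the paths of $l$ and $a$ split at depth $v\in V$. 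For $a\in A$ not below $u$, the split depth of $l$ and $a$ equals that of $a$ and any $a_0\in A$ below $u$, which lies in $V$. Thus $m_D((l-a)/N)=0$ for all $a\in A$.

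\emph{Step 3: the new frequency.} Let $\lambda'=\sum_{j<n}d_jN^j+lN^n$. Then $\lambda'\notin\Lambda$, since its $n$-th digit is not in $A$ while its first $n$ digits are $d_0,\dots,d_{n-1}$.
\begin{itemize}
\item For $\lambda\in\Lambda_n(d_0,\cdots,d_{n-1})$, the first differing digit is at position $n$, and orthogonality follows from Step 2 and Step 1.
\item For $\lambda\in\Lambda\setminus\Lambda_n(d_0,\cdots,d_{n-1})$, the first differing digit from $\lambda'$ is at some $k<n$, with $d_k(\lambda')=d_k$. Fix $\lambda_0\in\Lambda_n(d_0,\cdots,d_{n-1})$, which is possible by the nonemptiness hypothesis. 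Then $\lambda$ and $\lambda_0$ first differ at the same position $k$, with the same pair of digits. Since $e_\lambda\perp e_{\lambda_0}$, Step 1 gives $e_\lambda\perp e_{\lambda'}$.
\end{itemize}
So $E(\Lambda\cup\{\lambda'\})$ is orthogonal, contradicting maximality.

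The main obstacle is Step 2, the combinatorial extension of a non-maximal digit set. The tree-counting argument above is the route I would take.
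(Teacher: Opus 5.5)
Your proof is correct, but it finds the missing digit by a different route than the paper.

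\textbf{The paper's route.} The paper fixes $\lambda_0$ and runs a pigeonhole over the $p^{|T|}-1$ nonzero ``$T$-adic'' fractions $\ell=\sum_{t\in T}s_t/p^t$ with $0\le s_t<p$. It picks one that differs from every $\theta_{j0}\equiv (d_n(\lambda_j)-d_n(\lambda_0))/N \pmod 1$. The new $n$-th digit is then $d_n(\lambda_0)+N\ell \pmod N$. Finally it asserts that $\ell-\theta_{j0}=\sum_{t\in T}\tilde s_t/p^t$ with $|\tilde s_t|<p$, not all zero, so it is a root of some $\Phi_{p^{\bar t}}$.

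\textbf{Your route.} You turn digit orthogonality into the valuation condition $v_p(a-a')\in V$. You then find a node at a $V$-depth with a missing child in the $p$-ary tree of residues mod $N$.

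\textbf{Comparison.} The paper's argument is shorter, but its representation step tacitly needs each $\theta_{j0}$ to have base-$p$ digits supported on $T$. That holds when $T=\{1,\dots,k\}$, but not in general. Take $N=9$ and $D=\{0,3,6\}$, so $P_D=\Phi_9$ and $T=\{2\}$, with digit set $\{0,4\}$. Then $\theta_{10}=4/9$, and the pigeonhole allows $\ell=1/9$. The resulting digit $1$ gives $(1-4)/9\equiv 2/3$, and $P_D(e(2/3))=3\neq 0$, so the new frequency is not orthogonal to the one with digit $4$.

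Your tree argument always produces a valid digit (here any $l\equiv 2\pmod 3$), so it covers the general case. It also re-derives the bound of Theorem~\ref{orto} without Lemma~\ref{raices}: levels in $V$ multiply the node count by at most $p$, and all other levels by exactly $1$.

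The treatment of $\lambda\notin\Lambda_n(d_0,\dots,d_{n-1})$, via an auxiliary element of $\Lambda_n(d_0,\dots,d_{n-1})$ sharing the same first differing digit pair, is essentially the same in both proofs.
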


\begin{proof}
	Let $m$ be the cardinal of $\{d_n(\lambda): \lambda \in \Lambda_n(d_0,\cdots,d_{n-1})  \}$. As a consequence of the previous proposition, we know $m\leq p^{|T|}$. 

	Take $\lambda_0, \lambda_1,\cdots,\lambda_{m-1}$ all in $\Lambda_n(d_0,\cdots,d_{n-1})$ such that all $d_n(\lambda_j)$ are different. As pointed out in Remark \ref{ll}, we can define $\theta_{jl}\in(0,1)$ 
	\begin{equation*}
	\theta_{jl} = \frac{\lambda_j-\lambda_l}{N^n} - \lfloor \frac{\lambda_j-\lambda_l}{N^n} \rfloor = \frac{s_{jl}} {p^{t_{jl}}},
	\end{equation*}
	with $p\nmid s_{jl}$, $0\leq t_{jl}\leq \alpha$ and $m_D(\theta_{jl})=0$ if $j\neq l$. Also, since $p\nmid s_{jl}$, we have that $\Phi_{p^{t_{jl}}}(e^{2\pi i \theta_{jl}})=0$ and $t_{jl}\in T$.%

	Consider sequences of the form $(s_t)_{t\in T}$ with $0\leq s_t<p$ and not all of them zeros. There are $p^{|T|}-1$ such sequences. If it was the case that $m<p^{|T|}$, then there is one sequence such that
	\[\ell:=\sum_{t\in T}\frac{s_t}{p^t }
	\notin \left\{\frac{s_{j0}}{p^{t_{j0}}}:0<j<m \right\}.
	\] 
	
	Consider $
	\overline \lambda:= d_0+d_1N + \cdots + d_{n-1} N^{n-1} + d_n(\lambda_0) N^n + \ell N^{n+1}.
	$ It is important to observe that $\overline \lambda\in\Z$, since $\alpha$ is un upper bound of $T$. Our goal is to contradict maximality of $\Lambda$ by proving that $e_{\overline \lambda }$ is orthogonal to all the elements in $E(\Lambda)$. 
	
	First, take $\lambda \in \Lambda_n(d_0,\cdots,d_{n-1})$. There is $0\leq j<m$ such that $d_n(\lambda)=d_n(\lambda_j)$. We then have:
	\(
	\overline\lambda-\lambda= N^{n+1}\ell  + N^n \left( d_n(\lambda_0) - d_n( \lambda_j ) \right) + N^{n+1} K, 
	\) 
	for some $K\in\Z$. That means that 
	\[
	\theta := \frac{\overline\lambda-\lambda} {N^{n+1}}- K =
	\sum_{t\in T} \frac{s_t} {p^t} +
	\frac{  d_n(\lambda_0) - d_n( \lambda_j ) } {N} =
	\sum_{t\in T} \frac{s_t} {p^t} - \frac{s_{j0}}{p^{t_{j0}}}.
	\]
	Note that $\theta$ can be written as $\sum_{t\in T} \frac{\tilde {s_t}} {p^t}$ with $-p< \tilde{s_t} <p$ and at least one $\tilde {s_t}\neq 0$. If $\overline t$ is the biggest $t\in T$ for wich $\tilde s_t\neq 0$, then $\Phi_{p^{\overline t}}(e^{2\pi i \theta})=0$. Since $\Phi_{p^{\overline t}}\mid P_D$, we can conclude that $m_D(\frac{\overline\lambda-\lambda} {N^{n+1}})=0$. In consequence, $\hat\mu(\overline \lambda -\lambda)=0$. We have that $e_{\overline\lambda}$ and $e_{\lambda}$ are orthogonal.
    
	Now, take $\lambda\in\Lambda$ but $\lambda\notin  \Lambda_n(d_0,\cdots,d_{n-1})$. This means that  for some $0\leq k <n$ it must hold $d_k(\lambda)\neq d_k$. Take $n'$ the minimum such $k$. We have picked $0\leq n'<n$ such that $d_k(\lambda)=d_k$ if $0\leq k<n'$ and $d_{n'}\neq d_{n'}.$
    
    By hypothesis, we have $\Lambda_n(d_0,\cdots,d_{n-1})\neq \emptyset$, therefore we can pick $\tilde \lambda$ in this set. 
    In particlar, $d_{n'}(\tilde\lambda) \neq d_{n'}(\lambda) $
    and $d_{k}(\tilde\lambda) = d_{k}(\lambda)$ for $k<n'$.  
    
    Since $\lambda,\tilde\lambda$ are in $\Lambda$ then $e_{\lambda}$ and $e_{\tilde \lambda}$ are orthogonal. We can, then, proceed as before to obtain $\tilde\theta\in(0,1)$ and $t\in T$ for which $\Phi_{p^t}(e^{2 \pi i \tilde\theta})=0$, where $\tilde\theta$ is given by:
	\[
	\tilde \theta := \frac{  d_{n'}(\tilde \lambda) - d_{n'}( \lambda ) } {N} .
	\] 
    Furthermore, $\Phi_{p^t}(e^{2 \pi i (\frac{\tilde\lambda - \lambda}{N^{n'+1}})})=0$ and $m_D(\frac{ \tilde \lambda -  \lambda  } {N^{n'+1}})=0$.  Observe that $\overline\lambda$ and $\tilde\lambda$ have the first $n$ digits equal to $d_0,\dots,d_{n-1}$, then we can conclude also $m_D(\frac{\overline \lambda - \lambda}{N^{n'+1}})=0$. This completes the proof that $e_{\overline\lambda}$ is orthogonal to $e_\lambda$ for every $\lambda\in Lambda$. This would then make $\Lambda\cup\{ \overline\lambda \}$ an orthogonal set of frequencies, contradicting the maximality of $\Lambda$. The contradiction follows from the assumption $m<p^{|T|}$
\end{proof}

As we said before, integer maximal orthogonality is forced. We are interested, instead, in the usual notion of maximality.

\begin{defn}
	Given $\Lambda\subset\R$ we say that it is an orthogonal maximal set of frequencies for $\mu$ if $E(\Lambda)$ is orthogonal in $L^2(\mu)$ and for any $\Lambda'$ such that $\Lambda\subset\Lambda'$ with $E(\Lambda')$ orthogonal in $L^2(\mu)$ we necessarily have $\Lambda'=\Lambda$. 
\end{defn}

Given a real or complex function $f$, lets denote by $Z(f)$ its zero set: \(
Z_{\C}(f):=\{ z\in \C: f(z)=0 \}
\) or 
\(
Z_{\R}(f):=\{ t\in \R: f(t)=0 \}.
\)
\begin{theorem}\label{homog}
    Assume $Z_{\C}(P_D)\cap S^1\subset \bigcup_{t\in T}Z_{\C}(\Phi_{p^t})$. 
    Let $\Lambda\subset \R$ with $0\in \Lambda$. If $\Lambda$ is an orthogonal maximal set of frequencies for $\mu$ and    
    if   $\Lambda_n(d_0,\cdots,d_{n-1})\neq \emptyset$ then $$|\{d_n(\lambda):\lambda \in \Lambda_n(d_0,\cdots,d_{n-1})  \}|=p^{|T|}.$$ 
\end{theorem}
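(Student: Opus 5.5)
The strategy is to reduce the statement to Proposition \ref{maxint}. Two things are needed: that $\Lambda\subset\Z$, and that a set which is maximal among real frequency sets is also an integer orthogonal maximal set.

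\textbf{Step 1: $\Lambda\subset\Z$.} Since $0\in\Lambda$ and $E(\Lambda)$ is orthogonal, every $\lambda\in\Lambda\setminus\{0\}$ satisfies $\hat\mu(\lambda)=0$. By \eqref{producto} (an infinite product of factors tending to $1$, with no zero limit), some factor must vanish, so $m_D(N^{-j}\lambda)=0$ for some $j\geq 1$. Hence $e(N^{-j}\lambda)$ is a root of $P_D$ on $S^1$. The hypothesis $Z_{\C}(P_D)\cap S^1\subset\bigcup_{t\in T}Z_{\C}(\Phi_{p^t})$ then forces $e(N^{-j}\lambda)$ to be a primitive $p^t$-th root of unity with $t\in T$. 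Therefore $N^{-j}\lambda\in p^{-t}\Z$, so $\lambda\in N^{j}p^{-t}\Z\subset\Z$, because $t\leq\alpha\le \alpha j$. The same argument applies to differences: for $\lambda,\lambda'\in\Lambda$, the difference $\lambda-\lambda'$ lies in $\Z$ (this is also immediate from $\lambda,\lambda'\in\Z$).

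\textbf{Step 2: integer maximality.} Suppose $\Lambda\subset\Lambda'\subset\Z$ with $E(\Lambda')$ orthogonal. Then $\Lambda'$ is in particular a real set of frequencies with $E(\Lambda')$ orthogonal containing $\Lambda$. Real maximality therefore gives $\Lambda'=\Lambda$, so $\Lambda$ is an integer orthogonal maximal set of frequencies. This step is trivial once Step 1 holds; in fact Step 1 is only needed so that Proposition \ref{maxint} applies, since its hypothesis is stated for subsets of $\Z$.

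\textbf{Step 3.} Apply Proposition \ref{maxint} to obtain $|\{d_n(\lambda):\lambda\in\Lambda_n(d_0,\dots,d_{n-1})\}|=p^{|T|}$ whenever $\Lambda_n(d_0,\dots,d_{n-1})\neq\emptyset$.

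The only substantive point is Step 1, where the cyclotomic hypothesis on the unimodular zeros of $P_D$ is essential. Without that hypothesis, $P_D$ could have other roots of unity (or non-root-of-unity zeros) on $S^1$, allowing non-integer frequencies. I would check carefully two things there. First, that a zero of the infinite product really comes from a single vanishing factor; this holds because $m_D(\xi)\to m_D(0)=|D|/N$ as $\xi\to0$, with the normalization making the tail converge to a nonzero limit. Second, that the bound $T\subset\{1,\dots,\alpha\}$, which the paper has already established, gives integrality.
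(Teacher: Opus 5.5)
Your proposal is correct and follows the paper's proof. Both arguments use the hypothesis on the unimodular zeros of $P_D$, together with $T\subset\{1,\dots,\alpha\}$, to get $Z_{\R}(\hat\mu)\subset\Z$, deduce $\Lambda\subset\Z$, and apply Proposition \ref{maxint}; your Step 2 (real maximality implies integer maximality) just makes explicit what the paper leaves implicit.

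One minor correction to your side check: the tail of \eqref{producto} has a nonzero limit because $m_D(0)=1$ under the probability normalization $|D|^{-1}\sum_{d\in D}e_d$, not $m_D(0)=|D|/N$. The paper's factor $N^{-1}$ is consistent only when $|D|=N$, and with $m_D(0)=|D|/N<1$ the product would vanish identically. Using $m_D(0)=1$ and $|1-m_D(\xi)|=O(|\xi|)$, your single-vanishing-factor argument goes through.
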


\begin{proof}
    Note that any orthogonal set of frequencies containing zero need to be contained in $Z_{\R}(\hat\mu)$. On the other hand, in virtue of \eqref{producto}, we have 
    $$
    Z_{\R}(\hat\mu)=\bigcup_{n\geq q}N^nZ_{\R}(m_D).
    $$ 
    Recall that $\xi \in Z_{\R}(m_D)$ if and only if $e^{2\pi i \xi}\in Z_{\C}(P_D)$ and by hypothesis this are also zeros of $\Phi_{p^t}$ for some $t\in T$. Also, observe that $e^{2\pi i \xi}\in Z_{\C}(\Phi_{p^t})$ if and only if $\xi=\frac{s}{p^t}$ for $s\in \Z$ with $p\nmid s$. In particular (since $\alpha$ is an upper bound of $T$), we have $N^n\xi\in \Z$ for any $n\geq 1$ and $\xi\in Z_{\R}(m_D).$ We conclude that $Z_{\R}(\hat\mu)\subset\Z$.
    
    Furthermore, if $\Lambda$ is an orthogonal set of frequencies containing zero, then it must be contained in the zeros of $\hat\mu$, thus, in this case in $\Z$. Now, the thesis is a corollary of Proposition \ref{maxint}. 
\end{proof}

\subsection{Hadamard}
There is a crucial condition regarding orthogonality, which we will call Hadamard. We give here the definition of Hadamard pair and Hadamard triple. We point out that Laba and Wang in \cite{LW02} use the terminology `compatible pair', instead of `Hadamard pair'. 
\begin{defn}\label{Had_pair}
Given two finite sets $A$ and $B$, with $|A|=|B|$, we say that $(A,B)$ is a Hadamard pair if the matrix 
$(e(a\cdot b))_{\substack{{a\in A,}\\  {b\in B}}}$ is unitary. We say $(N,A,B)$ is a Hadamard triple if the matrix $(e(\frac{a\cdot b}{N}))_{\substack{{a\in A,}\\  {b\in B}}}$ is unitary.
\end{defn}

\begin{rmk}
    \begin{enumerate}
        \item The point mass measure $\sum_{a\in A}\delta _a$ is a spectral measure with spectrum $B$ if and only if $(A,B)$ is a Hadamard pair.  
        \item Let $D$ and $L$ be a finite set with $|D|=|L|$. It holds that $m_D(\frac{l-l'}{N})=0$ for any $l\neq l'\in L$ if and only if $(N,D,L)$ is a Hadamard triple. 
    \end{enumerate}
\end{rmk}
Next Theorem shows that for certain Cantor measures, maximal orthogonal sets of frequencies correspond with digits in Hadamard triples.

\begin{theorem}\label{Hadamard}
    Assume $P_D= \prod_{t\in T}\Phi_{p^t}$.  If $\Lambda$ is an orthogonal maximal set of frequencies for $\mu$ with $0\in\Lambda$ and    
    if   $\Lambda_n(d_0,\cdots,d_{n-1})\neq \emptyset$ then: $$|\{d_n(\lambda):\lambda \in \Lambda_n(d_0,\cdots,d_{n-1})  \}|=|D|.$$ 
    Moreover, if $L=\{d_n(\lambda):\lambda \in \Lambda_n(d_0,\cdots,d_{n-1})  \}$ then $(N,D,L)$ is a Hadamard triple. 
\end{theorem}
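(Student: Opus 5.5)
The plan is to reduce Theorem \ref{Hadamard} to Theorem \ref{homog} and then use Remark \ref{ll} to obtain the Hadamard property.

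\textbf{Checking the hypothesis of Theorem \ref{homog}.} If $P_D=\prod_{t\in T}\Phi_{p^t}$, then every zero of $P_D$ is a root of some $\Phi_{p^t}$ with $t\in T$. In particular,
\[
Z_{\C}(P_D)\cap S^1\subset \bigcup_{t\in T}Z_{\C}(\Phi_{p^t}),
\]
so Theorem \ref{homog} applies. It gives
\[
|\{d_n(\lambda):\lambda\in\Lambda_n(d_0,\cdots,d_{n-1})\}|=p^{|T|}.
\]

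\textbf{Identifying $p^{|T|}$ with $|D|$.} Evaluate the factorization at $x=1$. Since $\Phi_{p^t}(1)=p$ for every $t\geq 1$, we get
\[
|D|=P_D(1)=\prod_{t\in T}\Phi_{p^t}(1)=p^{|T|}.
\]
This proves the first claim. The only points to verify are that $T$ is a set of distinct exponents, which holds by definition, and that $0\notin T$, which holds since $T\subset\{t>0\}$.

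\textbf{The Hadamard property.} Let $L$ be the set of $n$-th digits, so $|L|=|D|$. For $l\neq l'$ in $L$, Remark \ref{ll}(2) gives $m_D(\frac{l-l'}{N})=0$. This item relies on orthogonality of integer frequencies; here $\Lambda\subset\Z$, because the proof of Theorem \ref{homog} showed $\Lambda\subset Z_{\R}(\hat\mu)\cup\{0\}\subset\Z$. Then item (2) of the remark after Definition \ref{Had_pair} (vanishing of $m_D$ at all differences, together with $|D|=|L|$) yields that $(N,D,L)$ is a Hadamard triple.

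For completeness I would verify that equivalence directly. The $(l,l')$ entry of $HH^*$, where $H=(e(dl/N))_{d\in D,\,l\in L}$, equals $\sum_{d}e(d(l-l')/N)=N\,m_D(\frac{l-l'}{N})$. Hence the off-diagonal entries vanish, and $H/\sqrt{|D|}$ is unitary since $H$ is square.

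\textbf{Main obstacle.} I expect the main subtlety to be justifying Remark \ref{ll}(2) carefully. It requires that two frequencies $\lambda,\lambda'$ sharing their first $n$ digits and differing at digit $n$ have $\hat\mu(\lambda-\lambda')=0$ forced at exactly scale $N^{n+1}$, i.e. $m_D(\frac{\lambda-\lambda'}{N^{n+1}})=0$. By $\Z$-periodicity of $m_D$, this reduces to $m_D(\frac{d_n(\lambda)-d_n(\lambda')}{N})=0$.

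The argument is as in the proof of Theorem \ref{orto}. Zeros of $m_D$ have the form $s/p^t$ with $t\in T$, $p\nmid s$ and $t\leq\alpha$. Therefore $(\lambda-\lambda')N^{-k}$ is a zero only when $N^{k-1}\,\|\,(\lambda-\lambda')$ up to a factor coprime to $N$, which forces $k=n+1$. I would re-derive this short valuation argument rather than rely on the slightly garbled indices in that proof.
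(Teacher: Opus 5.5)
Your proposal is correct and follows the paper's proof essentially step by step: Theorem \ref{homog} gives $p^{|T|}$ digits, evaluating the factorization at $x=1$ gives $|D|=p^{|T|}$, and Remark \ref{ll}(2) together with $|L|=|D|$ yields the Hadamard triple. Your explicit check that $\Lambda\subset\Z$ (needed before invoking Remark \ref{ll}, which is stated for integers) is a useful detail the paper leaves implicit. One small slip: the entry you compute, indexed by $(l,l')$, is an entry of $H^*H$, not $HH^*$; this is harmless because $H$ is square.
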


\begin{proof}
    With these hypotheses, we already know that $|\{d_n(\lambda):\lambda \in \Lambda_n(d_0,\cdots,d_{n-1})  \}|=p^{|T|}$ by Theorem \ref{homog}. On the other hand, $|D|=P_D(1)$ and $\Phi_{p^t}(1)=p$. Since, by hypothesis, he have $P_D(1)=\prod_{t\in T}\Phi_{p^t}(1)$ we conclude $|D|=p^{|T|}$ which proves the first part of the theorem.
    
    Now, from Remark \ref{ll}, we know that $m_D(\frac{l-l'}{N})=0$ for any $l\neq l'$ in $\{d_n(\lambda):\lambda\in\Lambda_n(d_0,\cdots,d_{n-1})\}$. This is equivalent to saying that columns $l$ and $l'$ in the matrix $(e^{2\pi i d\cdot l/N})_{d\in D,\  l\in L}$ are orthogonal. Since $|\{d_n(\lambda):\lambda\in\Lambda_n(d_0,\cdots,d_{n-1})\}|=|D|$ this implies that the matrix is unitary and, hence, the thesis follows. 
    
\end{proof}

\section{Trees}\label{tree}
We can consider the rooted $N$-homogeneous tree and assign to each vertex a label $\ell$ with $0\leq \ell <N$. If the label of a vertex of order $n$ is associated with the $n$-th digit in base-$N$ expansion of integers, then there is a correspondence between infinite paths in the tree ending only with 0's or only with $N-1$ and $\Z$.
In the previous section, we proved that for an integer orthogonal set of frequencies, the number of possible values for the $n$-th digit in the base-$N$ expansion (for elements with the first $n-1$ digits prescribed) are bounded by $p^{|T|}$. Thus, for an integer orthonormal set of frequencies, a subtree with homogeneity $p^{|T|}$ instead of $N$, suffices. Moreover, under the hypothesis of Theorem \ref{Hadamard}, there is a correspondence between infinite paths on this subtree and frequencies in a maximal orthogonal set. Let us be more specific.

Recall some basic definitions on graphs and trees. A graph is a pair $(V,E)$ where $V$ is a discrete set of vertices and $E\subset V\times V$ is the collection of edges. A path (finite or infinite) is a sequence of vertices $(v_n)$ such that $(v_n,v_{n+1})\in E$ for each $n$. A tree is a connected and loop free graph (a loop is a finite path $(v_1,\dots,v_n)$ with $v_1=v_n$) and a rooted tree is a tree with one specific vertex designated as the ``root". In rooted trees, a vertex is said of order $n$, if its distance to the root is $n$, that is, if a path of length $n$ joins the vertex to the root. A rooted tree is said $m$-homogeneous if each vertex has $m$ edges connecting to vertices in the next level. Thus, in an $m$-homogeneous rooted tree, there are $m^n$ vertices of level $n$. 

To each vertex of the rooted $|D|$-homogeneous tree assign a label from the set $\{0,\dots,N-1\}$. This assignment is called a labeling. A labeling is said $(N,D)$-spectral (or just spectral if it is clear by context) if the following conditions are satisfied:
\begin{enumerate}
    \item \label{uno} There is an infinite path starting at the root with each vertex labeled 0. 
    \item \label{dos} For any vertex $v$, there is (at least) an infinite path starting from $v$ and ending by nodes all labeled $0$ or all labeled $N-1$.
    \item \label{tres} For any vertex $v$ of order $n$, let $L=L_v$ be the set of labels of the vertex in level $n+1$ sharing an edge with $v$. For all $v\in V $, $(N,D,L)$ is a Hadamard triple. 
\end{enumerate}

Let $\cl$ be such a labeling. Denote by $\bf{T}(\cl)$ the tree labeled by $\cl$. For each infinite path $(d_n)_{n\geq 0}$ in $\bf{T}(\cl)$, if it  ends with zeros consider the integer $\sum_{n=0}^\infty d_nN^n$; and if it ends with $N-1$'s consider the integer $\sum_{n=0}^{n_0} d_nN^n - N^{n_0}$, where $d_n=N-1$ for all $n\geq n_0$. Let $\Lambda(\cl)$ be the set of all such integers. In other words, $\Lambda(\cl)$ is the set of integers whose base-$N$ expansion digits correspond to infinite paths in $\bf{T}(\cl)$ ending with only 0's or only $N-1$'s. 

\begin{lemma}
    If $\lambda$, $\lambda'$ are different elements in $\Lambda(\cl)$ for some spectral label $\cl$ then $e_\lambda$ and $e_{\lambda'}$
 are orthogonal in $L^2(\mu)$.
\end{lemma}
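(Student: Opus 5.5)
The plan is to reduce orthogonality to a zero of $m_D$ at the first level where the two paths split, and then use the Hadamard condition at the branching vertex.

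Let $\lambda\neq\lambda'$ in $\Lambda(\cl)$. They correspond to two distinct infinite paths $(d_k)$ and $(d_k')$ in $\bf{T}(\cl)$, both starting at the root. The base-$N$ digit sequences of $\lambda,\lambda'$, in the sense of Section \ref{setting}, are exactly these label sequences. This holds because the construction of $\Lambda(\cl)$ (sum for paths ending in $0$'s, sum minus $N^{n_0}$ for paths ending in $(N-1)$'s) inverts the expansion defined there, and that expansion is unique.

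Since $\lambda\neq\lambda'$, the digit sequences differ. Let $n$ be the least index with $d_n\neq d_n'$.

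\textbf{Step 1: the paths share the vertex at level $n-1$.} I need the paths to agree as vertices, not merely as labels, up to level $n-1$. I treat an element of $\Lambda(\cl)$ as coming from a path, and pick for $\lambda,\lambda'$ the paths realizing them. If two paths carry equal labels up to level $n-1$ but pass through different vertices, let $v$ be the last common vertex, at level $j<n$. The two children of $v$ on the two paths are distinct vertices of $L_v$ carrying the same label. But $(N,D,L_v)$ is a Hadamard triple, so its matrix has $|D|$ distinct columns, and hence the labels of distinct children of $v$ are distinct. Therefore the paths coincide as vertices through level $n-1$. At the split vertex $v$ (level $n-1$; for $n=0$ take the root), the two children have different labels $d_n, d_n' \in L_v$.

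\textbf{Step 2: a zero of $m_D$.} The Hadamard property of $(N,D,L_v)$ gives $m_D\big(\tfrac{d_n-d_n'}{N}\big)=0$, by the Remark following Definition \ref{Had_pair}. Since $\lambda-\lambda'\equiv N^n(d_n-d_n') \pmod{N^{n+1}}$, we have
\[
\frac{\lambda-\lambda'}{N^{n+1}} = \frac{d_n-d_n'}{N} + K, \qquad K\in\Z.
\]
By $\Z$-periodicity of $m_D$, $m_D\big((\lambda-\lambda')N^{-(n+1)}\big)=0$.

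\textbf{Step 3: orthogonality.} The product formula \eqref{producto} with $j=n+1$ gives $\hat\mu(\lambda-\lambda')=0$. Hence $\langle e_\lambda,e_{\lambda'}\rangle_{L^2(\mu)}=0$.

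\textbf{Main obstacle.} The delicate point is the bookkeeping in Step 1 between paths, vertices, labels and the signed digit expansion. The congruence $\lambda-\lambda'\equiv N^n(d_n-d_n')\pmod{N^{n+1}}$ is routine, including for negative integers, since $\sum_{k\le n_0} d_kN^k - N^{n_0}$ has the stated low digits. Conditions \eqref{uno} and \eqref{dos} of the labeling are not needed for this lemma.
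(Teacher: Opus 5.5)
Your proof is correct and follows the paper's argument: take the first index $n$ where the digit sequences differ, use the Hadamard triple at the branching vertex to get $m_D\big(\tfrac{d_n-d_n'}{N}\big)=0$, move this to $m_D\big((\lambda-\lambda')N^{-(n+1)}\big)=0$ by periodicity, and conclude $\hat\mu(\lambda-\lambda')=0$ from \eqref{producto}. Your Step 1 uses $|L_v|=|D|$ to show that sibling labels are distinct, so the two paths really share the parent vertex; the paper simply asserts this point.
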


\begin{proof}
    Define $k=\min\{n\geq 0:d_n(\lambda)\neq d_n(\lambda')\}$. Then $l=d_{k}(\lambda)$ and $l'=d_{k}(\lambda')$ are labels of vertices of order $k$ with the same parent. In consequence, they belong to a set $L$ such that $(N,D,L)$ is a Hadamard triple. 
    Since $m_D ((l-l')/N)$ is the inner product between the $l$-th and $l'$-th columns of the matrix $(e^{2\pi i d \cdot l/N})_{d\in D, l\in L}$ we conclude that $m_D((l-l')/N)=0$. 

    In these circumstances, $m_D((\lambda-\lambda')/N^{k+1})=m_D((l-l')/N)=0$. Therefore, $\hat\mu(\lambda-\lambda')=0$, as desired.  
\end{proof}


\begin{lemma}\label{maxim}
     If $P_D= \prod_{t\in T}\Phi_{p^t}$ and $\cl$ is a spectral labeling of the rooted $|D|$-homogeneous tree then $\Lambda(\cl)$ is a maximal set of orthogonal frequencies. 
\end{lemma}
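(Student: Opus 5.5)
The proof combines two facts: every frequency orthogonal to $0$ is an integer, and Theorem \ref{orto} caps the number of digits at each branch. Orthogonality of $E(\Lambda(\cl))$ is the previous lemma, so only maximality needs proof. Suppose $\lambda'\in\R\setminus\Lambda(\cl)$ is such that $E(\Lambda(\cl)\cup\{\lambda'\})$ is orthogonal; we aim for a contradiction.

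\emph{Step 1: reduce to integers.} By condition (\ref{uno}), the all-zero path gives $0\in\Lambda(\cl)$, so $\hat\mu(\lambda')=0$. Since $P_D=\prod_{t\in T}\Phi_{p^t}$, every unimodular zero of $P_D$ is a root of some $\Phi_{p^t}$ with $t\in T$. This is exactly the hypothesis of Theorem \ref{homog}. The first part of that proof then shows $Z_{\R}(\hat\mu)\subset\Z$, so $\lambda'\in\Z$ and $\lambda'$ has a base-$N$ digit sequence $(d_n(\lambda'))_{n\geq 0}$.

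\emph{Step 2: locate where $\lambda'$ leaves the tree.} Follow the tree from the root, at each level choosing the child whose label equals the next digit of $\lambda'$. Children of a vertex $v$ carry distinct labels, because $(N,D,L_v)$ is a Hadamard triple; two equal labels would give two equal columns of a unitary matrix. Hence this choice is unique whenever it exists.

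If it exists at every level, the digits of $\lambda'$ trace an infinite path in $\bf{T}(\cl)$. That path ends in $0$'s or $(N-1)$'s, since $\lambda'$ is an integer, so $\lambda'\in\Lambda(\cl)$, a contradiction. Otherwise there is a first level $n$ and a vertex $v$, reached along the labels $d_0(\lambda'),\dots,d_{n-1}(\lambda')$, such that $d_n(\lambda')\notin L_v$.

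I expect the main care to be needed in matching the indexing: the root versus the vertices of order $n$ and the labels of order-$n$ vertices versus the digit $d_n$. The level-$0$ case has to be handled with the same convention used to define $\Lambda(\cl)$.

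\emph{Step 3: count digits.} Write $\Lambda=\Lambda(\cl)\cup\{\lambda'\}$ and $d_j=d_j(\lambda')$ for $j<n$. For each child $w$ of $v$, condition (\ref{dos}) gives an infinite path from $w$ ending in all $0$'s or all $(N-1)$'s. Prepending the path from the root to $v$ produces an element of $\Lambda(\cl)\cap\Lambda_n(d_0,\dots,d_{n-1})$ whose $n$-th digit is the label of $w$. Hence every label in $L_v$ occurs as $d_n(\lambda)$ for some $\lambda\in\Lambda_n(d_0,\dots,d_{n-1})$, and $\lambda'$ adds one more value. This gives
\[
|\{d_n(\lambda):\lambda\in\Lambda_n(d_0,\dots,d_{n-1})\}|\geq |L_v|+1=|D|+1 .
\]

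\emph{Step 4: contradiction.} Evaluating $P_D=\prod_{t\in T}\Phi_{p^t}$ at $1$ gives $|D|=p^{|T|}$. Since $E(\Lambda)$ is orthogonal and $\Lambda\subset\Z$, Theorem \ref{orto} bounds the count in Step 3 by $p^{|T|}=|D|$, which contradicts the lower bound $|D|+1$. Therefore no such $\lambda'$ exists, and $\Lambda(\cl)$ is a maximal set of orthogonal frequencies.
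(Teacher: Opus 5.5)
Your proof is correct and follows the paper's route. You reduce to integers using $0\in\Lambda(\cl)$ and $Z_{\R}(\hat\mu)\subset\Z$, find the first level where the digits of $\lambda'$ leave $\mathbf{T}(\cl)$, and get $|D|+1$ digit values against the bound $p^{|T|}=|D|$ from Theorem \ref{orto}; you also make explicit two points the paper leaves as ``by construction'' (the children of $v$ carry $|D|$ distinct labels, and condition (\ref{dos}) realizes each of them by an element of $\Lambda(\cl)$ with the prescribed prefix).
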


\begin{proof}
    By the previous Lemma we know that $\Lambda(\cl)$ is an orthogonal set of frequencies. 
    To see that is maximal, take $\overline\lambda$ such that $e_{\overline\lambda}$ is orthogonal to $e_{\lambda}$ for any $\lambda\in \Lambda(\cl)$. 
    By condition \ref{uno} of spectrality, $0\in \Lambda(\cl)$. Thus, $e_{\overline\lambda}$ and $e_0$ are orthogonal and we have $\hat\mu(\overline\lambda)=0$. By hypothesis, the zeros of $P_D$ are the union of the zeros of $\Phi_{p^t}$ so, as in the proof of Theorem \ref{homog}, we have $\overline\lambda\in\Z$. Now can expand $\overline\lambda$ in base $N$, obtaining $(d_n(\overline\lambda))_{n\geq 0}$.
    If $\overline\lambda\notin \Lambda(\cl)$, then there exists an index $k$ such that the path $d_0(\overline\lambda)\cdots d_k(\overline\lambda)$ is not in the tree $\bf{T}(\cl)$. Take the minimum such $k$. Thus, there exists $\lambda \in \Lambda(\cl)$ such that $d_n(\lambda)=d_n(\overline\lambda)$ for $n<k$. 

    Note that $\overline\Lambda:=\Lambda(\cl)\cup \{\overline\lambda\}$ is an orthogonal set of frequencies. If we note by $\overline L=\{ d_k(\lambda):\lambda \in \overline\Lambda_k(d_0(\overline\lambda),\cdots,d_{k-1}(\overline\lambda)) \}$, then by Theorem \ref{orto}, $|\overline L|\leq p^{|T|}$. Recall that $p^{|T|}=|D|$ by hypothesis. On the other hand we have: 
    $$
    \overline L = \{ d_k(\overline\lambda ) \} \cup \{d_k(\lambda):\lambda\in \Lambda(\cl)_k(d_0(\overline\lambda),\cdots,d_{k-1}(\overline\lambda))\}.$$
    Therefore, $|\overline L|=|D|+1$ by construction. This contradicts that $|\overline L|\leq |D|$.  
    
\end{proof}

\begin{theorem}\label{theotree}
    Assume $P_D=\prod_{t\in T}\Phi_{p^t}$. If $\Lambda$ is a maximal orthogonal set of frequencies with $0\in\Lambda$ then there exists a spectral labeling $\cl$ such that $\Lambda=\Lambda(\cl)$. Reciprocally, if $\cl$ is a spectral labeling then $\Lambda(\cl)$ is a maximal orthogonal set containing $0$.  
\end{theorem}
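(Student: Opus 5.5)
The converse direction follows from results already proved. If $\cl$ is a spectral labeling, Lemma \ref{maxim} gives that $\Lambda(\cl)$ is a maximal orthogonal set of frequencies. Condition \ref{uno} supplies the all-zero path from the root, and that path corresponds to the integer $0$, so $0\in\Lambda(\cl)$.

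For the direct implication, let $\Lambda$ be maximal orthogonal with $0\in\Lambda$. As in the proof of Theorem \ref{homog}, $\Lambda\subset Z_\R(\hat\mu)\cup\{0\}\subset\Z$; the hypothesis $P_D=\prod_{t\in T}\Phi_{p^t}$ guarantees that every zero of $P_D$ on the circle is a root of some $\Phi_{p^t}$. Hence every $\lambda\in\Lambda$ has a base-$N$ digit sequence $(d_n(\lambda))_{n\ge0}$. Let $\mathcal P$ be the set of finite prefixes $(d_0,\dots,d_{n-1})$ with $\Lambda_n(d_0,\dots,d_{n-1})\neq\emptyset$. This is a rooted tree, whose children of a prefix are its one-digit extensions in $\mathcal P$. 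By Theorem \ref{Hadamard}, each prefix has exactly $|D|$ children, and their set $L$ of last digits satisfies that $(N,D,L)$ is a Hadamard triple.

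Since every vertex of $\mathcal P$ has exactly $|D|$ children, $\mathcal P$ is a copy of the rooted $|D|$-homogeneous tree. I fix, at each vertex, a bijection between its $|D|$ children in the abstract tree and the $|D|$ children in $\mathcal P$. Composing these bijections level by level gives an isomorphism, and I label each abstract vertex by the last digit of the corresponding prefix; the root has the empty prefix. I then check spectrality:
\begin{itemize}
\item Condition \ref{tres} is exactly the Hadamard statement of Theorem \ref{Hadamard}.
\item Condition \ref{uno} holds because $0\in\Lambda$ has all digits $0$, so every prefix of zeros lies in $\mathcal P$.
\item Condition \ref{dos} holds because any vertex corresponds to a prefix of some $\lambda\in\Lambda$. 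Continuing along the digits of $\lambda$ gives an infinite path from that vertex, which eventually ends in all $0$'s or all $(N-1)$'s.
\end{itemize}

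It remains to show $\Lambda=\Lambda(\cl)$. The inclusion $\Lambda\subset\Lambda(\cl)$ holds because the digit path of each $\lambda\in\Lambda$ lies in the tree and ends in $0$'s or $(N-1)$'s. For the reverse inclusion, note that $\Lambda(\cl)$ is orthogonal by the lemma preceding Lemma \ref{maxim}, and it contains $\Lambda$. Maximality of $\Lambda$ then forces $\Lambda(\cl)=\Lambda$. I would follow this route rather than arguing path by path, because an infinite path all of whose prefixes are in $\mathcal P$ need not obviously be the digit sequence of an element of $\Lambda$.

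The main obstacle is the definition of $\Lambda(\cl)$ for a path ending in $(N-1)$'s. The integer assigned there must be the one whose digit sequence is that path: $\sum_{n<n_0}d_nN^n-N^{n_0}$ with $d_n=N-1$ for $n\ge n_0$. I would check this convention carefully so that the inclusion $\Lambda\subset\Lambda(\cl)$ is correct for negative $\lambda$.
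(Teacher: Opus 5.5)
Your proof is correct and follows essentially the same route as the paper: you build the labeling level by level from the digit sets supplied by Theorem \ref{Hadamard}, verify the three spectrality conditions, obtain $\Lambda\subset\Lambda(\cl)$, and conclude equality from the orthogonality of $\Lambda(\cl)$ and the maximality of $\Lambda$, with the converse given by Lemma \ref{maxim}. Your caution about the $(N-1)$-tail convention is well placed: the correct integer is $\sum_{n<n_0}d_nN^n-N^{n_0}$, as you wrote, whereas the paper's formula $\sum_{n=0}^{n_0}d_nN^n-N^{n_0}$ is off by one (for the all-$(N-1)$ path it gives $N-2$ instead of $-1$), so your version is the one needed for $\Lambda\subset\Lambda(\cl)$ to hold when $\lambda<0$.
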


\begin{proof}
    Assume $\Lambda$ is an orthogonal maximal set of frequencies containing zero and let us define a labeling $\cl$. It will be done inductively in the order of vertices. By Theorem \ref{Hadamard}, the set $\{ d_0(\lambda):\lambda\in\Lambda \}$ has $|D|$ elements, so we can label the vertices of level 1 with them. Assume the vertices of the first $n$ levels were labeled in such a way that if a path from the root to one vertex of level $n$ was labeled $d_0\cdots d_{n-1}$ then $\Lambda_n (d_0,\cdots,d_{n-1})\neq\emptyset$. Then, if $v$ is a vertex of level $n$ and a path from the root to $v$ was labeled $d_0\cdots d_{n-1}$ then $\Lambda_n(d_0\cdots d_{n-1})\neq\emptyset$.
    In this case, by Theorem \ref{Hadamard}, the set $\{d_n(\lambda): \lambda\in \Lambda_n(d_0\cdots d_{n-1})\}$ has $|D|$ elements. These elements will label the children of the vertex $v$. In this way, the vertices of level $n+1$ were labeled satisfying that if a path from the root to a vertex of level $n+1$ was label $d_0\cdots d_{n}$ then $\Lambda_{n+1}(d_0\cdots d_n)\neq\emptyset$. Thus, the induction is consistent and it was defined a labeling $\cl$. Let us see that is spectral. 
    
    Observe that for each element $\lambda\in\Lambda$ the sequence $(d_n(\lambda))_{n\geq 0}$ corresponds to an infinite path in the tree $\bf{T}(\cl)$. In consequence, since $0\in\Lambda$ there is a path labeled only with 0's. Condition \ref{uno} is satisfied. Moreover, if $v$ is a vertex of order $n$ and its path from the root was labeled $d_0\cdots d_{n-1}$ then there is a $\lambda\in \Lambda$ with $d_k=d_k(\lambda)$ and then the path on the tree corresponding to $\lambda$ passes through $v$ an ends only with 0's or only with $N-1$'s. Condition \ref{dos} is satisfied. Condition \ref{tres} is immediate from the construction of $\cl$. Thus $\cl$ is a spectral labeling. 

    Furthermore, this proves that $\Lambda\subset\Lambda(\cl)$. Since $\Lambda$ is maximal and $\Lambda(\cl)$ is orthogonal, we conclude $\Lambda=\Lambda(\cl). $
    
    The converse is just Lemma \ref{maxim}. 
\end{proof}

\subsection{Example}
If $N=8$ and $D=\{0,2,4,6\}$ then $P_D(x) = \Phi_4(x) \Phi_8(x)$. So, it satisfies the hypothesis of Theorem \ref{theotree}. In figure \ref{Arbol} is pictured a possible labeling for the tree in  first levels. 

\begin{figure}[!hbt]
    \caption{First levels of a possible labeling of a spectrum for $N=8$ and $D=\{0,2,4,5\}$ }
    \label{Arbol}

\begin{tikzpicture}[
    level distance=2cm,
    level 1/.style={sibling distance=3cm},
    level 2/.style={sibling distance=.7cm},
    every node/.style={circle, draw, inner sep=1pt, minimum size=5mm}
]

\node {$\emptyset$}
    child { node {0}
        child { node {0} }
        child { node {3} }
        child { node {5}
                child { node{2} }
                child { node [fill=lightgray]{4} 
                                child[level distance=.5cm] { node[draw=none, font=\tiny, anchor=center] {$\lambda=5\cdot 8+4\cdot 8^2+\dots$}
                                edge from parent[draw=none]}
                        }
                child { node{5} }
                child { node{7} }
                }
        child { node {6} }
    }
    child { node {2}
        child { node {0} }
        child { node {1} }
        child { node {6} }
        child { node {7} }
    }
    child { node {5}
        child { node {1} }
        child { node {2} 
            child {node {1} }
            child {node {4} }
            child {node(A) [fill=lightgray]{6} 
                    child[level distance=.5cm] { node[draw=none, font=\tiny, anchor=center] {$\lambda=5+2\cdot 8+6\cdot 8^2+\dots$}
                                edge from parent[draw=none]}
                }
            child {node {7} }
            }
        child { node {3} }
        child { node {4} }
    }
    child { node {7}
        child { node {0} }
        child { node {1} }
        child { node {3} }
        child { node {6} }
    };

    \end{tikzpicture}
\end{figure}

\section*{Acknowledgment} The author wants to thank Lucía Rossi for renewing my enthusiasm for research in general and for these topics in particular.


\end{document}